\documentclass[letterpaper, 10 pt, conference]{ieeeconf}  

\IEEEoverridecommandlockouts                              
\title{\LARGE \bf  High-Probability Convergence of SGD via Batched Updates}
\author{Feng Zhu, Robert W. Heath Jr., and Aritra Mitra
\thanks{F. Zhu and A. Mitra are with the Dept. of Electrical and Computer Engineering, North Carolina State University. Email: {\tt \{fzhu5, amitra2\}@ncsu.edu}. Robert W. Heath Jr. is with the Dept. of Electrical and Computer Engineering at the University of California, San Diego, USA. Email: {\tt rwheathjr@ucsd.edu}. This material is based upon work supported in part by the NSF under Grant No. NSF-CCF-2225555.}}
\usepackage{amsmath,amsfonts,amssymb,color,amsthm}
\usepackage{nccmath}
\usepackage{epsfig}
\usepackage{psfrag}
\usepackage{algorithm}
\usepackage{algpseudocode}
\usepackage{array}
\usepackage{epstopdf}
\usepackage{cite}
\usepackage{footmisc}
\usepackage{mathtools}
\usepackage{bbm}
\usepackage{bm}
\usepackage{comment}
\usepackage{dsfont}

\usepackage{mathtools}
\usepackage{epstopdf}
\usepackage{tikz}
\usetikzlibrary{automata, positioning, arrows.meta}
\usepackage{relsize}
\usetikzlibrary{shapes,arrows}
\usepackage{cite}
\usepackage{epstopdf}

\definecolor{winered}{rgb}{0.5,0,0}
\usepackage{scalerel}
\usepackage{xcolor}
\usepackage[colorlinks]{hyperref}

\newcommand{\mc}{\mathcal}

\newcommand{\E}{\mathbb{E}}

\renewcommand{\P}{\mathbb{P}}

\newcommand{\R}{\mathbb{R}}

\newcommand{\cF}{\mathcal{F}}

\newcommand{\cO}{\mathcal{O}}

\newcommand{\iprod}[2]{\left\langle #1,\, #2\right\rangle}
\newcommand{\bracket}[1]{\left[ #1\right]}
\newcommand{\norm}[1]{\left\| #1\right\|_2}
\newcommand{\bigo}[1]{\cO\left( #1\right)}
\newcommand{\paren}[1]{\left( #1\right)}

\AtBeginDocument{%
  \hypersetup{
    citecolor=blue,
    linkcolor=winered}}

\DeclareMathOperator*{\argmin}{\arg\!\min}

\newtheorem{problem}{Problem}

\newtheorem{theorem}{Theorem}

\newtheorem{lemma}{Lemma}
\newtheorem{remark}{Remark}

\newtheorem{assumption}{Assumption}

\begin{document}

\maketitle
\thispagestyle{empty}
\pagestyle{empty}

\begin{abstract}
Stochastic gradient descent (SGD) is the primary workhorse for large-scale optimization. While the average behavior of its iterates—typically characterized by mean-squared error bounds—is well-understood, obtaining high-probability guarantees for the last iterate remains challenging. Prior approaches to this problem have either imposed restrictive assumptions (such as bounded domains or gradients) or relied on complex proofs involving auxiliary sequences. In this work, we propose \texttt{Batched SGD}, a simple variant that partitions online samples into epochs and performs a single update per epoch using a refined, low-variance gradient estimate. Our main contribution demonstrates that this batching mechanism enables a surprisingly simple high-probability analysis that avoids both restrictive assumptions and auxiliary sequences. Under standard smoothness and norm-sub-Gaussian noise assumptions, we establish near-optimal rates for both strongly convex and non-convex objectives. Furthermore, we show that our batching idea extends naturally to federated learning (FL). We provide the first high-probability guarantees for FL, achieving logarithmic communication complexity, linear speedup in the number of agents, and resilience to data heterogeneity.
\end{abstract}

\section{Introduction}\label{sec:intro}
We consider the problem of minimizing a smooth objective function $f$. In modern large-scale machine learning (ML) tasks, either the dataset is massive or data arrives sequentially, making the computation of the full gradient $\nabla f(\cdot)$ prohibitively expensive or even infeasible. As a result, standard gradient descent (GD) becomes inaccessible in such settings. To address this challenge, the seminal work of~\cite{robbins1951stochastic} introduces the well-known stochastic gradient descent (SGD) method, which relies on inexpensive \textit{stochastic} gradient estimates in place of the full gradient. Due to its computational efficiency, scalability, and favorable convergence properties, SGD has become the workhorse of modern optimization.

Given its broad applicability, the convergence properties of SGD have been extensively studied in the literature, with a large body of work focusing on \textit{in-expectation} analyses~\cite{bottou2018optimization,moulines2011non}, which provide mean-squared error (MSE) bounds. While such guarantees characterize the average behavior, they can be less informative in modern large-scale applications, where only a limited number of runs (or even a single run) of the algorithm can be performed due to computational constraints~\cite{harvey2019tight}. This has led to an increasing interest in \textit{high-probability} analyses, which capture tail behavior and quantify the uncertainty of rare but significant deviations. 


Compared to in-expectation analyses, obtaining such high-probability guarantees is substantially more challenging (see Section~\ref{subsec:challenges}), and typically requires stronger assumptions on the noise distribution, such as light-tailedness~\cite{Orabona,liu2023high,liu2023revisiting}, captured via norm-sub-Gaussianity, to leverage suitable concentration inequalities. Nevertheless, even in the simplest setting---smooth and strongly-convex objectives with norm-sub-Gaussian noise---a significant gap exists in the understanding of high-probability guarantees for standard SGD.

Majority of the results rely on restrictive assumptions such as bounded domains~\cite{rakhlin2011making,jain2021making} or bounded noise/stochastic gradients~\cite{harvey2019tight,li2019convergence,kavis2022high}. These assumptions significantly limit the applicability of the analysis: bounded domains exclude unconstrained problems, while bounded gradient assumptions are violated even for simple quadratic objectives.

Only a limited number of works avoid such restrictive assumptions; however, they typically provide guarantees only for the \textit{averaged iterate} and rely on complex analyses~\cite{li2019convergence,liu2023high}. While iterate averaging is useful for analysis, it does not reflect the behavior of the algorithm in practice, where the \textit{last iterate} is often used. Establishing high-probability guarantees for the last iterate under general conditions remains technically challenging and is still not well understood. A notable exception is~\cite{liu2023revisiting}, which does provide a high-probability guarantee for the last iterate; however, the analysis is again technically involved and relies on an auxiliary sequence. Due to the complexity of existing analyses, it remains unclear whether they can be readily extended to more complex settings, such as those involving multiple agents in popular paradigms such as federated learning (FL)~\cite{mcmahan, konevcny}. Given this context, we now summarize our main contributions.

$\bullet$ \textbf{Batched SGD variant.} Our first contribution is to propose a variant of online SGD called \texttt{Batched SGD} that leads to surprisingly simple high-probability convergence proofs relative to prior work. Unlike standard SGD that makes updates at every time-step as soon as a new data sample arrives, the essence of our approach (detailed in Section~\ref{sec:bsgd}) lies \emph{in making fewer updates, but using more precise variance-reduced} update directions. Such directions are constructed by partitioning the online data stream into batches of equal size, and averaging the samples within each batch. The key design step is to choose the batch-size as a function of the properties of the objective function $f$. 

$\bullet$ \textbf{Last Iterate Rates with Simple Analysis.} Given $T$ samples, and a prescribed failure probability $\delta \in (0,1)$, we establish rates of $\tilde{\mc{O}}\left(\log(dT/\delta)/T\right)$ and $\tilde{\mc{O}}\left(\log(dT/\delta)/\sqrt{T}\right)$ for smooth, strongly convex (Theorem~\ref{thm:sc})
and non-convex functions (Theorem~\ref{thm:nc}), respectively, that hold with probability $1-\delta$; here, $d$ is the dimension of the underlying parameter. Our rate for the strongly convex case is directly for the last iterate. These rates match their in-expectation counterparts in terms of the dependence on $T$, and achieve a logarithmic dependence on $1/\delta$, implying tight concentration intervals. Notably, the batching strategy leads to significantly simpler proofs compared to prior work, and requires neither restrictive assumptions nor auxiliary sequences. Instead, we only need to leverage a standard variance reduction property of norm-sub-Gaussian random vectors~\cite{jin2019short}. 

$\bullet$ \textbf{Extension to Federated Learning.} In Section~\ref{sec:FL}, we show that our batching idea extends naturally to federated learning (FL), where local computations at each agent can be interpreted as forming batches between communication rounds. In this view, the batch length directly determines the communication frequency between a central server and the agents. For smooth, strongly convex losses, we show in Theorem~\ref{thm:fl_sc} that this approach yields high-probability guarantees with \textit{logarithmic} communication rounds, \textit{linear speedup} in the number of agents, and \textit{no bias due to heterogeneity} in agents' local loss functions. To our knowledge, this is the first high-probability result in FL. 

It should be noted that while mini-batching is a classical idea, identifying that batching can lead to near-optimal high-probability rates with simple proofs is the key new insight offered by our work.

\section{Problem Formulation and Motivation}\label{sec:prob_form}
We start by providing relevant technical background. Consider the following optimization problem:
\begin{equation}
    \min_{x\in\R^d} f(x),
\end{equation}
where $f:\R^d\to\R$ is the objective function. Our goal is to obtain the optimum $x^*\in\R^d$ that minimizes $f$. To achieve this goal, the classical SGD algorithm~\cite{robbins1951stochastic} operates iteratively, where new samples arrive online at each time-step. Specifically, at time-step (iteration) $t$, a query is made to an oracle, and the oracle returns a stochastic gradient $g(x_t, \xi_t)$. Here, $x_t\in\R^d$ is the parameter iterate at time-step $t$, and $\{\xi_t\}_{t\ge 0}$ is a sequence of I.I.D. (independent and identically distributed) random variables (RVs), capturing the randomness in the algorithm. The iterate $x_t$ is updated as:
\begin{equation}
    x_{t+1} = x_t - \alpha g(x_t, \xi_t),\quad\forall t=0,\cdots, T-1,\label{eqn:SGD}
\end{equation}
where $T$ denotes the horizon of the algorithm, and $\alpha$ is the step-size. It is well known that for smooth and strongly convex functions $f$, the error $f(x_T) - f(x^*)$ decays as $\bigo{1/T}$ \emph{in expectation}, when SGD in~\eqref{eqn:SGD} is run for $T$ iterations with a suitable step-size~\cite{nemirovski2009robust}. Our interest instead lies in characterizing high-probability \emph{concentration} bounds on $f(x_T) - f(x^*)$, where $\{x_t\}$ is either generated by SGD or some variant thereof. We summarize this below. 

\begin{problem}
Given a failure probability $\delta \in (0,1)$, and $T$ gradient queries (samples), compute an estimate $x_T$ of $x^*$, and quantify an error bound of the form $f(x_T)- f(x^*) \leq e(T, \delta)$, which holds with probability at least $1-\delta.$
\end{problem}

One can always translate a known in-expectation bound on $f(x_T) - f(x^*)$ to a probabilistic deviation bound using either Markov's or Chebyshev's inequality. However, the resulting error bound $e(T, \delta)$ will then have a polynomial dependence on $1/\delta$, which can be vacuously large if one seeks guarantees that fail with very low probability (e.g., say $\delta =10^{-6}$). In contrast, we aim to obtain error bounds $e(T, \delta)$ that scale logarithmically in $1/\delta$. To set the stage for such an analysis, we make certain standard assumptions~\cite{nemirovski2009robust, bottou2018optimization, jain2021making}.

\begin{assumption}[Smoothness]\label{ass:smoothness}
    The objective function $f$ is $L$-smooth, i.e., for any $x,y\in\R^d$, the following holds:
\begin{equation}
    \norm{\nabla f(x)-\nabla f(y)}\leq L\norm{x-y},
\end{equation}
where $\nabla(\cdot)$ denotes the (full) gradient operator. 
\end{assumption}

\begin{assumption}[Strong Convexity]\label{ass:SC}
The objective function $f$ is $\mu$-strongly convex, i.e., for any $x,y\in\R^d$, we have
\begin{equation}
    f(y)\geq f(x)+\iprod{\nabla f(x)}{y-x} + \frac{\mu}{2}\norm{x-y}^2.
\end{equation}
\end{assumption}

\begin{assumption}[Unbiased Gradient]\label{ass:unbiased}
The stochastic gradient $g(x,\xi)$ is unbiased, i.e., 
\begin{equation}
    \E_\xi\bracket{g(x,\xi)}=\nabla f(x), \forall x \in \mathbb{R}^d. \label{eqn:unbiased}
\end{equation}
\end{assumption}

\begin{assumption}[Norm-Sub-Gaussian Noise]\label{ass:subG}
The norm of the noise in the stochastic gradient $g(x,\xi)$ is sub-Gaussian, i.e., for any given $x\in\R^d$, the following holds:
\begin{equation}
    \E_\xi\bracket{\exp\paren{\norm{g(x,\xi)-\nabla f(x)}^2/\sigma^2}}\leq \exp\paren{1},\label{eqn:subG}
\end{equation}
where $\sigma^2>0$  is some constant known as the variance proxy.
\end{assumption}
Assumption~\ref{ass:subG} has been extensively used in the literature to develop high-probability guarantees~\cite{harvey2019tight,Orabona,liu2023revisiting}. Essentially, it implies that the tail of the noise norm distribution decays at least as fast as that of a Gaussian distribution.

The remainder of this section is dedicated to highlighting the specific challenges that arise when one aims to derive high-probability bounds for the standard SGD algorithm in~\eqref{eqn:SGD}. These challenges will directly motivate the batching strategy in Section~\ref{sec:bsgd} that leads to a short, simple, and intuitive convergence analysis. To proceed, define $r_t:=f(x_t)-f(x^*)$, and $e_t:=g(x_t,\xi_t) - \nabla f(x_t)$ as the noise in the stochastic gradient at time-step $t$. We then have the following lemma that captures the one-step descent in $r_t$.
\begin{lemma}[One-Step Descent]\label{lem:onestep}
The following holds for any $t\ge0$ by choosing $\alpha L\le 1/2$:
\begin{equation}
\medmath{
    r_{t+1}\leq r_t - \frac{\alpha}{2}\norm{\nabla f(x_t)}^2-\alpha\iprod{\nabla f(x_t)}{e_t}+L\alpha^2\norm{e_t}^2.}\label{eqn:onestep}
\end{equation}
\end{lemma}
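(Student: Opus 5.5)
We will use the descent lemma implied by $L$-smoothness (Assumption~\ref{ass:smoothness}): for any $x,y\in\R^d$,
\[
f(y)\le f(x)+\iprod{\nabla f(x)}{y-x}+\frac{L}{2}\norm{y-x}^2 .
\]
Apply this with $x=x_t$ and $y=x_{t+1}=x_t-\alpha g(x_t,\xi_t)$, and substitute the decomposition $g(x_t,\xi_t)=\nabla f(x_t)+e_t$. Subtracting $f(x^*)$ from both sides turns the left-hand side into $r_{t+1}$ and the leading term on the right into $r_t$. Strong convexity and unbiasedness are not needed; the statement is a deterministic, pathwise inequality.

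The linear term becomes
\[
-\alpha\iprod{\nabla f(x_t)}{\nabla f(x_t)+e_t}=-\alpha\norm{\nabla f(x_t)}^2-\alpha\iprod{\nabla f(x_t)}{e_t}.
\]
The quadratic term is $\frac{L\alpha^2}{2}\norm{\nabla f(x_t)+e_t}^2$. Bound it with $\norm{a+b}^2\le 2\norm{a}^2+2\norm{b}^2$, which gives at most
\[
L\alpha^2\norm{\nabla f(x_t)}^2+L\alpha^2\norm{e_t}^2 .
\]
We deliberately do not expand this square exactly. An exact expansion would leave a second cross term $L\alpha^2\iprod{\nabla f(x_t)}{e_t}$, which does not match the target form.

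Collecting terms, the coefficient of $\norm{\nabla f(x_t)}^2$ is $-\alpha+L\alpha^2=-\alpha(1-\alpha L)$. Under $\alpha L\le 1/2$ this is at most $-\alpha/2$, which yields \eqref{eqn:onestep}.

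The only step needing care is the treatment of the quadratic term. Using Young's inequality there absorbs the noise cross term into a pure $\norm{e_t}^2$ term with coefficient exactly $L\alpha^2$. That is precisely the constant in the statement, and the step-size condition then makes the gradient coefficient work out. Everything else is routine algebra.
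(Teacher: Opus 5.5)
Your proof is correct and matches the paper's argument step for step. Both use the descent lemma from $L$-smoothness, substitute $g(x_t,\xi_t)=\nabla f(x_t)+e_t$, bound the quadratic term via $\norm{a+b}^2\le 2\norm{a}^2+2\norm{b}^2$ (the paper cites Jensen's inequality for this step), and then use $\alpha L\le 1/2$ to reduce the gradient coefficient $-\alpha+L\alpha^2$ to at most $-\alpha/2$.
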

\begin{proof}
Since the objective function $f$ is $L$-smooth, we have
\begin{equation}
\label{eqn:OG_onestep}
\medmath{
\begin{aligned}
    r_{t+1}&\leq r_t+\iprod{\nabla f(x_t)}{x_{t+1}-x_t}+\frac{L}{2}\norm{x_{t+1}-x_t}^2\\
    &\overset{(a)}{=}r_t-\alpha \iprod{\nabla f(x_t)}{g(x_t,\xi_t)}+\frac{L\alpha^2}{2}\norm{g(x_t,\xi_t)}^2\\
    &\overset{(b)}{\leq} r_t-\alpha\norm{\nabla f(x_t)}^2-\alpha\iprod{\nabla f(x_t)}{e_t}\\
    &\ +L\alpha^2\norm{\nabla f(x_t)}^2+L\alpha^2\norm{e_t}^2\\
    &\overset{(c)}{\leq} r_t - \frac{\alpha}{2}\norm{\nabla f(x_t)}^2-\alpha\iprod{\nabla f(x_t)}{e_t}+L\alpha^2\norm{e_t}^2,
\end{aligned}}
\end{equation}
where $(a)$ follows from the update rule in~\eqref{eqn:SGD}, $(b)$ holds due to the definition of $e_t$ and Jensen's inequality, and $(c)$ is the result of choosing $\alpha L\leq 1/2$. The proof is complete.
\end{proof}

From the right-hand-side (R.H.S.) of~\eqref{eqn:onestep}, we note that the one-step progress made by SGD depends on three terms: (i) the ``good term'' $-(\alpha/2) \norm{\nabla f(x_t)}^2$, (ii) the cross term $-\alpha\iprod{\nabla f(x_t)}{e_t}$, and (iii) the squared-norm term $L\alpha^2\norm{e_t}^2$. Under strong convexity, the good term can be related to the function gap $r_t$, inducing contraction toward the optimum. The main challenge is to control (ii) \& (iii) so that \textit{their combined effect does not offset this descent and can instead be subsumed into the dominant good term.}
We first focus on the relatively simpler task of controlling  $\norm{e_t}^2$.

\begin{lemma}[Bounding Noise Magnitude]\label{lem:sn} Given any $\delta\in(0, 1)$, with probability (w.p.) at least $1-\delta$, we have:
\begin{equation}
    \norm{e_t}^2\leq \sigma^2\log \left(\frac{T\exp(1)}{\delta} \right),\quad \forall t\ge 0\label{eqn:sqnm}
\end{equation}
where $\sigma^2$ is as defined in Assumption~\ref{ass:subG}.
\end{lemma}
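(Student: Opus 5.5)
The plan is to bound each $\norm{e_t}^2$ separately with a Chernoff-type argument based on Assumption~\ref{ass:subG}, then take a union bound over the $T$ time-steps $t=0,\dots,T-1$. The one subtle point is that $x_t$ is random and depends on $\xi_0,\dots,\xi_{t-1}$, whereas Assumption~\ref{ass:subG} is stated for a fixed $x$. I handle this by conditioning. Let $\cF_t$ be the $\sigma$-algebra generated by $\xi_0,\dots,\xi_{t-1}$. Then $x_t$ is $\cF_t$-measurable, and $\xi_t$ is independent of $\cF_t$ because the $\xi_t$ are I.I.D. Applying Assumption~\ref{ass:subG} at the frozen point $x=x_t$ gives
\begin{equation*}
\E\bracket{\exp\paren{\norm{e_t}^2/\sigma^2}\,\middle|\,\cF_t}\leq \exp(1).
\end{equation*}
Taking the outer expectation removes the conditioning, so $\E\bracket{\exp(\norm{e_t}^2/\sigma^2)}\le \exp(1)$ holds unconditionally. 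I expect this conditioning step to be the only point that needs care, since everything after it is routine.

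Next, fix a threshold $a>0$. Since $u\mapsto \exp(u/\sigma^2)$ is increasing and positive, Markov's inequality gives
\begin{equation*}
\P\paren{\norm{e_t}^2> a}=\P\paren{\exp(\norm{e_t}^2/\sigma^2)>\exp(a/\sigma^2)}\le \exp(1-a/\sigma^2).
\end{equation*}
I choose $a=\sigma^2\log\paren{T\exp(1)/\delta}$. This makes the right-hand side equal to $\exp(1)\cdot \delta/(T\exp(1))=\delta/T$.

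Finally, a union bound over $t=0,\dots,T-1$ shows that the probability that some $\norm{e_t}^2$ exceeds $a$ is at most $T\cdot \delta/T=\delta$. This yields~\eqref{eqn:sqnm} simultaneously for all time-steps in the horizon with probability at least $1-\delta$. Here I interpret ``$\forall t\ge0$'' as $t\in\{0,\dots,T-1\}$, which matches the factor $T$ in the logarithm; no independence across $t$ is needed, because the union bound does not require it.
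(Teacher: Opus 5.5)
Your proof is correct and follows essentially the same route as the paper: condition on the past so that $x_t$ is frozen, apply Assumption~\ref{ass:subG} together with Markov's inequality to $\exp(\norm{e_t}^2/\sigma^2)$, remove the conditioning via the tower property, and union-bound over the $T$ time-steps. The differences are cosmetic. You remove the conditioning before applying Markov rather than after, and you explicitly set the per-step failure probability to $\delta/T$, which the paper leaves implicit when it says to union-bound after setting the bound equal to $\delta$.
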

\begin{proof}
Denote $\cF_t:=\sigma\paren{\xi_0,\xi_1,\cdots, \xi_t}$ as the sigma-algebra capturing all randomnesses up to time-step $t$ ($\cF_{-1}=\varnothing$ when $t=0$), $\P_t(\cdot):=\P(\cdot\mid\cF_{t-1})$, and $\E_t[\cdot]:=\E[\cdot\mid\cF_{t-1}]$ as the probability and expectation operators conditioned on $\cF_{t-1}$. Then, for any fixed $t\geq 0$ and $s>0$, we have:
\begin{equation}
\medmath{
\begin{aligned}
    \P_t\paren{\norm{e_t}^2\geq s}&=\P_t\paren{\exp\paren{\frac{\norm{e_t}^2}{\sigma^2}}\geq\exp\paren{\frac{s}{\sigma^2}}}\\
    &\overset{(a)}{\leq} \exp\paren{-\frac{s}{\sigma^2}}\E_t\bracket{\exp\paren{\frac{\norm{e_t}^2}{\sigma^2}}}\\
    &\overset{(b)}{\leq} \exp\paren{-\frac{s}{\sigma^2}+1},
\end{aligned}
}
\end{equation}
where $(a)$ holds due to Markov's inequality, and $(b)$ uses Assumption~\ref{ass:subG} and the fact that $x_t$ is deterministic conditioned on $\cF_{t-1}$. It then readily follows from the tower property that
\begin{equation}
\medmath{
    \P\paren{\norm{e_t}^2\geq s}=\E\bracket{\P_t\paren{\norm{e_t}^2\geq s}}\leq \exp\paren{-\frac{s}{\sigma^2}+1}.}\label{eqn:subg_proof}
\end{equation}
Setting the R.H.S. of~\eqref{eqn:subg_proof} to $\delta$ yields $s = \sigma^2\log\frac{\exp(1)}{\delta}$. The proof follows by union-bounding over all $T$ iterations. 
\end{proof}
Thanks to the light-tailedness of the gradient noise norm $\norm{e_t}$, Lemma~\ref{lem:sn} tells us that $\alpha ^2 L \Vert e_t \Vert^2_2$ is on the order of $\bigo{\alpha^2 L\sigma^2\log(T/\delta)}$. We now proceed to the most delicate part of obtaining the desired high-probability guarantee. 

\subsection{Challenges in the Analysis}
\label{subsec:challenges}
We are left to tackle the cross term $-\alpha\iprod{\nabla f(x_t)}{e_t}$, which poses the main technical difficulty of the analysis. With an in-expectation analysis, this term does not pose a challenge, as it vanishes after taking expectations conditioned on $\cF_{t-1}$ under the unbiased gradient assumption. In contrast, a high-probability analysis precludes taking expectations, and thus the cross term cannot be eliminated in this manner. 

$\bullet$ \textbf{Why Do Naive Approaches Fail?} A natural approach to control the cross term is to use Young's inequality as follows: 
\begin{equation}
    -\alpha\iprod{\nabla f(x_t)}{e_t}\leq \frac{\alpha}{4}\norm{\nabla f(x_t)}^2 + \alpha\norm{e_t}^2,\label{eqn:young_cross}
\end{equation}
where the constants are chosen so that the effect of the good term $-(\alpha/2) \norm{\nabla f(x_t)}^2$ is not completely canceled. The issue with this approach is that the $\Vert e_t \Vert^2_2$ term is now dominated by $\alpha\norm{e_t}^2$, which scales \textbf{linearly} with the step-size $\alpha$, rather than \textbf{quadratically} as $L\alpha^2\norm{e_t}^2$ in the original one-step descent analysis in~\eqref{eqn:OG_onestep}. 

Consequently, invoking the gradient domination property of strong convexity and unrolling the resulting recursion for $T$ time-steps yields a bias of order $\bigo{(\sigma^2/\mu)\log(T/\delta)}$ in the final bound for $r_T$ using Lemma~\ref{lem:sn}. Crucially, this bias term does not get hit by the step-size $\alpha$. With an in-expectation analysis, the corresponding bias scales as $\bigo{\alpha L\sigma^2/\mu}$, allowing the step-size $\alpha$ to be tuned to control the final error floor. However, the naive high-probability analysis using Young's inequality causes the bias to be \emph{step-size independent}, leading to vacuous bounds. 

$\bullet$ \textbf{Martingale-Based Methods.} An alternative approach is to observe that the cross term sequence $\{\iprod{\nabla f(x_t)}{e_t}\}_{t\geq 0}$ forms a martingale difference sequence (MDS) since $e_t$ is zero-mean conditioned on $\cF_{t-1}$. A natural attempt is then to apply Azuma-Hoeffding to control the cumulative deviation of the cross terms. The obstacle here is that the martingale increments are \textit{iterate-dependent}: $\iprod{\nabla f(x_t)}{e_t}$ depends on  ${\nabla f(x_t)}$, which cannot be uniformly bounded as required by standard martingale concentration inequalities. 

A large body of work has bypassed this difficulty by imposing restrictive assumptions of boundedness on either the stochastic gradients or on the domain~\cite{harvey2019simple,li2019convergence,harvey2019tight,rakhlin2011making,jain2021making,kavis2022high}. However, such assumptions are often unrealistic in practice and can even fail in simple settings; for instance, even for quadratic unconstrained problems, the gradients are unbounded. The few works that avoid such assumptions, e.g.,~\cite{Orabona,liu2023high}, typically establish guarantees only for the averaged iterate. A notable exception is~\cite{liu2023revisiting}, which provides guarantees for the last iterate, but relies on a significantly more involved analysis based on an auxiliary sequence.

Motivated by these issues, we ask: \textit{Can we develop a simple high-probability analysis for SGD?} We provide an affirmative answer in the next section by introducing a simple variant of SGD that offers several advantages: (i) a significantly simpler analysis; (ii) a direct guarantee on the last iterate; and (iii) a straightforward extension to the FL setting. To the best of our knowledge, no prior approach enjoys all three aforementioned benefits.

\section{Batched SGD Algorithm}\label{sec:bsgd}
Recall that in the standard SGD algorithm~\eqref{eqn:SGD}, the parameter $x_t$ is updated using $g(x_t,\xi_t)$ at each time-step $t$, as soon as a new query is made. Since $g(x_t,\xi_t)$ is constructed from a \textbf{single} sample, it suffers from high variance. Given this observation, our main idea is to develop a variant of online SGD called \texttt{Batched SGD} that makes fewer parameter updates, but each update is made using a \emph{less noisier, more precise direction that enjoys a variance-reduction property.} As we shall see, this intuitive idea will lead to simple convergence proofs. We now provide the details. 

\textbf{Core Idea.} Recalling that the horizon of the algorithm is $T$, we propose to split the $T$ samples into $K$ batches with $H$ samples each (we assume for simplicity that $T$ is divisible by both $K$ and $H$). The parameters $K, H$ will be specified in the next section. For each batch $k=0, 1, \ldots, K-1$, the $H$ samples within it are used to construct a refined gradient direction which is used to make \textbf{one single update} to the parameter at the end of the batch. 

Specifically, denote the parameter at the beginning of batch $k$ as $x^{(k)}$. During each batch $k$, $H$ stochastic gradients $\{g(x^{(k)}, \xi_{kH+\ell})\}_{\ell=0}^{H-1}$ are collected, while the corresponding iterate $x^{(k)}$ is kept ``\textit{frozen}'' from the beginning of batch $k$. Then, a \textit{refined stochastic gradient} $g_k$ is constructed as:
\begin{equation}
    g_k:=\frac{1}{H}\sum_{\ell=0}^{H-1}g(x^{(k)}, \xi_{kH+\ell}),\label{eqn:refined_gradient}
\end{equation}
where $\xi_{kH+\ell}$ is essentially the $\ell$-th sample in the $k$-th batch. Since these random variables are I.I.D.,  Lemma~\ref{lem:VR_subG} shows that $g_k$ enjoys an $H$-fold lower variance compared to the SGD gradient constructed from a single sample. Using $g_k$, the parameter $x^{(k)}$ is then updated at the end of batch $k$ as:
\begin{equation}
    x^{(k+1)}=x^{(k)}-\alpha g_k.\label{eqn:bsgd_update}
\end{equation}
The above steps are summarized in Algorithm~\ref{algo:bsgd}, and illustrated in Fig.~\ref{fig:sgd}. In the following section, we will establish that the \texttt{Batched SGD} algorithm yields a near-optimal high-probability guarantee with a much simplified proof.

\begin{figure}
    \centering
    \includegraphics[scale = 0.525]{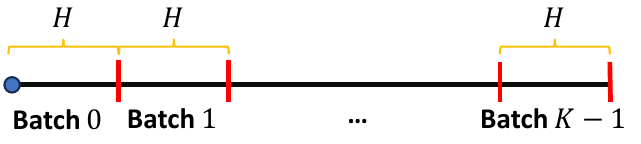}
    \caption{Illustration of \texttt{Batched SGD}, where $T$ samples are divided into $K$ batches with $H$ samples each. Red bars indicate where the updates happen.}
    \label{fig:sgd}
\end{figure}

\begin{algorithm}[t!]
\caption{\texttt{Batched SGD}} 
\label{algo:bsgd}
\begin{algorithmic}[1]
\State \textbf{Input:} Step-size $\alpha$, initial parameter $x^{(0)}$, number of samples $T$, number of batches $K$, batch-size $H$.
\For {$k=0,\ldots,K-1$} 
\For {$\ell=0,\ldots, H-1$}
\State Collect $g(x^{(k)}, \xi_{kH+\ell})$
\EndFor
\State Construct refined gradient $g_k$ as per~\eqref{eqn:refined_gradient}
\State Update parameter via~\eqref{eqn:bsgd_update}
\EndFor
\end{algorithmic}
\end{algorithm}
\section{Main Results And Analysis}\label{sec:analysis}
In this section, we present the main results of our \texttt{Batched SGD} algorithm along with the corresponding analysis. We begin with a key lemma that underpins the foundation of our analysis, followed by several insights that highlight the role of the batching idea.

\begin{lemma}[Key Technical Lemma]\label{lem:onestep_sc}
Define $r^{(k)}:=f(x^{(k)})-f(x^*)$. Suppose Assumptions~\ref{ass:smoothness} to~\ref{ass:subG} hold. Then, given any $\delta\in(0,1)$, there exists a universal constant $c$ such that with $\alpha\leq1/(2L)$, the following holds for \texttt{Batched SGD} w.p. at least $1-\delta$: 
\begin{equation}
\label{eqn:main_decomp}
    r^{(K)} \leq \underbrace{\paren{1-\frac{\alpha\mu}{2}}^Kr^{(0)}}_{T_1}+\underbrace{\frac{4c^2\sigma^2}{\mu H}\log\frac{2dT}{\delta}}_{T_2}.
\end{equation}
\end{lemma}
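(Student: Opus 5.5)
Our plan is to repeat the one-step descent of Lemma~\ref{lem:onestep} at the batch level, with $e_t$ replaced by the batch noise $e^{(k)} := g_k - \nabla f(x^{(k)})$. We then control the cross term by Young's inequality, which is exactly the ``naive'' approach that fails for plain SGD. The difference here is that the squared noise now scales as $\sigma^2/H$, so the step-size independent bias becomes $\mathcal{O}(\sigma^2\log(\cdot)/(\mu H))$, and the batch size $H$ can be used to make it small.

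\textbf{Step 1 (deterministic one-step bound).} By $L$-smoothness and the update~\eqref{eqn:bsgd_update}, exactly as in~\eqref{eqn:OG_onestep} with $\alpha L\le 1/2$, we get
\[
r^{(k+1)} \le r^{(k)} - \frac{\alpha}{2}\norm{\nabla f(x^{(k)})}^2 - \alpha\iprod{\nabla f(x^{(k)})}{e^{(k)}} + L\alpha^2\norm{e^{(k)}}^2 .
\]
We apply Young's inequality to the cross term in the form $-\alpha\iprod{\nabla f}{e}\le \frac{\alpha}{4}\norm{\nabla f}^2+\alpha\norm{e}^2$. Using $L\alpha^2\le \alpha/2$, this yields
\[
r^{(k+1)} \le r^{(k)} - \frac{\alpha}{4}\norm{\nabla f(x^{(k)})}^2 + 2\alpha\norm{e^{(k)}}^2 .
\]
Strong convexity gives the PL inequality $\norm{\nabla f(x)}^2\ge 2\mu\, r(x)$, so
\[
r^{(k+1)}\le (1-\alpha\mu/2)\, r^{(k)} + 2\alpha\norm{e^{(k)}}^2 .
\]
This step needs no probabilistic argument.

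\textbf{Step 2 (high-probability bound on batch noise).} Conditioned on $\mathcal{F}_{kH-1}$, the point $x^{(k)}$ is fixed, and $e^{(k)}$ is the average of $H$ i.i.d.\ zero-mean vectors, each norm-sub-Gaussian with proxy $\sigma^2$ by Assumptions~\ref{ass:unbiased} and~\ref{ass:subG}. We invoke the variance-reduction property of norm-sub-Gaussian vectors (Lemma~\ref{lem:VR_subG}, from~\cite{jin2019short}). It states that the average is norm-sub-Gaussian with parameter $c\sigma/\sqrt{H}$ for a universal constant $c$, so that
\[
\P\bigl(\norm{e^{(k)}}\ge t\bigr)\le 2d\,e^{-t^2/(2c^2\sigma^2/H)} .
\]
The $d$ and the factor $2$ enter here. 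Setting the failure probability to $\delta/K$ for each batch and applying the tower property as in Lemma~\ref{lem:sn}, then a union bound over the $K\le T$ batches, we obtain with probability at least $1-\delta$ that
\[
\norm{e^{(k)}}^2 \le \frac{2c^2\sigma^2}{H}\log\frac{2dT}{\delta}\quad\text{for all } k .
\]
We expect this to be the main obstacle. It requires getting the conditional concentration right, since the iterate is random but frozen within the batch, and it requires matching the constants in the paper's statement of the variance-reduction lemma.

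\textbf{Step 3 (unroll).} On this event we iterate the recursion from Step 1 over $K$ batches and bound the geometric sum by $\sum_j(1-\alpha\mu/2)^j\le 2/(\alpha\mu)$. This gives
\[
r^{(K)}\le (1-\alpha\mu/2)^K r^{(0)} + \frac{2}{\alpha\mu}\cdot 2\alpha\cdot\frac{2c^2\sigma^2}{H}\log\frac{2dT}{\delta} .
\]
The second term is $\frac{8c^2\sigma^2}{\mu H}\log\frac{2dT}{\delta}$. To land exactly on the stated $4c^2$, we tighten one of the constants: use $\alpha\norm{e}^2$ together with $L\alpha^2\norm{e}^2\le\frac{\alpha}{2}\norm{e}^2$ and the sharper Young split, or absorb the factor into the definition of $c$. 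Since $c$ is only asserted to be a universal constant, either choice completes~\eqref{eqn:main_decomp}.
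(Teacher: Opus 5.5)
Your proposal is correct and follows essentially the same route as the paper: the batch-level one-step descent, the Young split $\frac{\alpha}{4}\norm{\nabla f(x^{(k)})}^2+\alpha\norm{e^{(k)}}^2$, the PL inequality, a conditional application of Lemma~\ref{lem:VR_subG} with the tower property and a union bound over $K\le T$ batches, and unrolling the geometric recursion. The only discrepancy is the constant: Lemma~\ref{lem:VR_subG} as stated gives $\norm{e^{(k)}}^2\le\frac{c^2\sigma^2}{H}\log\frac{2dT}{\delta}$ and hence exactly $4c^2$, whereas the extra factor $2$ from your tail-bound restatement cannot be removed by the Young-split tightening you sketch (that still leaves a coefficient near $6c^2$), but it can legitimately be absorbed into $c$, since $c$ is only asserted to exist.
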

The proof is deferred to later in this section.
\begin{remark}
Lemma~\ref{lem:onestep_sc} provides an upper-bound on the function sub-optimality error after $K$ batches (equivalently $T$ samples). The upper-bound in~\eqref{eqn:main_decomp} comprises an exponentially decaying optimization term $T_1$, and a statistical term $T_2$ capturing the effect of noise. A key feature of our approach, enabled by the variance-reduction effect of batching, is that the noise term $T_2$ gets scaled down by the batch size $H$, which is a tunable parameter. When $H=1$, we recover the standard SGD algorithm, where the noise term $T_2$ can no longer be controlled, leading to vacuous bounds, as we had discussed earlier in Section~\ref{subsec:challenges}. 

To see how the additional degree of freedom provided by $H$ can be exploited, observe that $T_1$ decays exponentially in $K$, while the noise term $T_2$ decays linearly with $H$. As such, to balance these two terms, a $K$ on the order of $\mc{O}(\log(T))$ suffices. Since $T=KH$, this implies that the batch size $H$ is essentially $T$ up to log factors.
\end{remark}

The above intuition is formalized in the following result which specifies the choices of $K$ and $\alpha$. 

\begin{theorem}[Strongly Convex Objective, \texttt{Batched SGD}]\label{thm:sc}
Suppose the conditions in Lemma~\ref{lem:onestep_sc} hold. Then, given any $\delta \in (0,1)$, 
by choosing $K=4\kappa\log T$ and $\alpha=1/(2L)$, the following holds for \texttt{Batched SGD} w.p. at least $1-\delta$:
\begin{equation}
\label{eqn:final_sc}
    r^{(K)}\leq \frac{r^{(0)}}{T} + \frac{16c^2\kappa\sigma^2}{\mu T}\log T\log\frac{2dT}{\delta}, 
\end{equation}
where $\kappa:=L/\mu$ is the condition number.
\end{theorem}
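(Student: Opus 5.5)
The plan is to obtain Theorem~\ref{thm:sc} directly from Lemma~\ref{lem:onestep_sc}. We substitute the prescribed $K$ and $\alpha$ and bound the terms $T_1$ and $T_2$ in~\eqref{eqn:main_decomp} separately. No new probabilistic argument is needed: the high-probability event is exactly the one from Lemma~\ref{lem:onestep_sc}. The conditions $\alpha\le 1/(2L)$ and Assumptions~\ref{ass:smoothness}--\ref{ass:subG} are met by the choice $\alpha=1/(2L)$.

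\textbf{Optimization term $T_1$.} With $\alpha=1/(2L)$ we have $\alpha\mu/2=\mu/(4L)=1/(4\kappa)$. Using $1-x\le e^{-x}$, we get
\[
\paren{1-\frac{1}{4\kappa}}^K\le \exp\paren{-\frac{K}{4\kappa}}=\exp(-\log T)=\frac{1}{T}
\]
when $K=4\kappa\log T$. Hence $T_1\le r^{(0)}/T$. Note that $\kappa\ge 1$, so $1/(4\kappa)\in(0,1)$ and the base is nonnegative, which keeps the step valid.

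\textbf{Statistical term $T_2$.} Since $T=KH$, we have $H=T/K=T/(4\kappa\log T)$, so $1/H=4\kappa\log T/T$. Substituting gives
\[
T_2=\frac{4c^2\sigma^2}{\mu}\cdot\frac{4\kappa\log T}{T}\log\frac{2dT}{\delta}=\frac{16c^2\kappa\sigma^2}{\mu T}\log T\log\frac{2dT}{\delta},
\]
which matches~\eqref{eqn:final_sc}. Adding the two bounds completes the argument.

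The only step I expect to need care is bookkeeping rather than a real obstacle. The divisibility and integrality of $K$ and $H$ are handled by the paper's standing simplifying assumption that $T$ is divisible by $K$ and $H$. If $K$ were instead rounded up to $\lceil 4\kappa\log T\rceil$, the $T_1$ bound would only improve, while $1/H=K/T$ would change by at most a constant factor. One should also check that $T$ is large enough that $K\le T$, i.e.\ $H\ge1$, so that the batching is well defined.
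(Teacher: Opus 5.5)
Your proposal is correct and is the paper's argument: Theorem~\ref{thm:sc} follows from Lemma~\ref{lem:onestep_sc} by substituting $\alpha=1/(2L)$ and $K=4\kappa\log T$, using $(1-1/(4\kappa))^K\le e^{-K/(4\kappa)}=1/T$ for $T_1$ and $1/H=K/T=4\kappa\log T/T$ for $T_2$. Your remarks on $\kappa\ge 1$ and on the integrality of $K$ and $H$ are bookkeeping that the paper leaves implicit.
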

The proof of Theorem~\ref{thm:sc} follows immediately from Lemma~\ref{lem:onestep_sc} by selecting $\alpha=1/(2L)$ and $K=4\kappa\log T$. A few takeaways are in order for Theorem~\ref{thm:sc}.

$\bullet$ \textit{Near-Optimal High-Probability Rates.} Theorem~\ref{thm:sc} reveals that a near-optimal high-probability guarantee of order $\mc{O}\left(\log(T)\log(dT/\delta)/T\right)$ is achievable using our \texttt{Batched SGD} algorithm. This rate matches the result in~\cite{liu2023revisiting} up to a $\log d$ factor, but with a significantly simpler proof.

$\bullet$ \textit{Guarantee on The Last Iterate.} Theorem~\ref{thm:sc} establishes a high-probability guarantee on the last iterate. As far as we know, the only work that achieves this last-iterate high-probability bound for SGD is~\cite{liu2023revisiting}, but with a much more involved analysis that exploits an auxiliary sequence.

$\bullet$ \textit{Logarithmic Update Rounds.} A notable feature that emerges from our analysis of \texttt{Batched SGD} is that, to achieve near-optimal high-probability bounds, it suffices to update the parameter only logarithmically many times, quantified by $K=\bigo{\log T}$. This key insight will directly inform the choice of the communication frequency in Section~\ref{sec:FL}, when we study federated learning. 

We now proceed to prove  Lemma~\ref{lem:onestep_sc}.

\noindent \textbf{Proof of Lemma~\ref{lem:onestep_sc}.} We first state a variant of~\cite[Corollary 7]{jin2019short}, which pertains to variance-reduction of norm-sub-Gaussian random vectors.

\begin{lemma}[Variance-Reduced Norm-Sub-Gaussian]\label{lem:VR_subG}
Let random vectors $X_1,\cdots,X_n\in\R^d$ be independent, zero-mean, and norm-sub-Gaussian with variance proxy $\sigma^2>0$, i.e., $\E\bracket{X_i}=0$, $\E\bracket{\exp\paren{\norm{X_i}^2/\sigma^2}}\leq \exp(1)$, $\forall i\in[n]$. Then, there exists a universal constant $c$ such that for any $\delta\in(0,1)$, with probability at least $1-\delta$:
\begin{equation}
    \norm{\frac{1}{n}\sum_{i=1}^nX_i}\leq c\sqrt{\frac{\sigma^2}{n}\log\frac{2d}{\delta}}.\label{eqn:VR_subG}
\end{equation}
\end{lemma}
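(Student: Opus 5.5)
We need to prove Lemma~\ref{lem:VR_subG}: a concentration bound for the average of independent, zero-mean, norm-sub-Gaussian vectors. The plan is to reduce it to a vector Hoeffding/Azuma-type inequality, following the route of~\cite{jin2019short}. The proof has three steps.

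\textbf{Step 1: scalar sub-Gaussianity of the norm.} Show that each $X_i$ is norm-sub-Gaussian in the tail sense. By Markov's inequality, exactly as in the proof of Lemma~\ref{lem:sn},
\begin{equation}
\P\paren{\norm{X_i}\geq s}\leq \exp\paren{1-\frac{s^2}{\sigma^2}} \leq 2\exp\paren{-\frac{s^2}{2\sigma^2}}.
\end{equation}
The second inequality holds for every $s\ge 0$ after adjusting constants. This is the tail definition of norm-sub-Gaussianity used in~\cite{jin2019short}, with parameter $\cO(\sigma)$.

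\textbf{Step 2: concentration of the sum via a Hermitian dilation.} This is the main step. Embed each vector $X_i$ into the $(d+1)\times(d+1)$ symmetric matrix
\begin{equation}
Y_i=\begin{bmatrix}0 & X_i^\top\\ X_i & 0\end{bmatrix},
\end{equation}
whose operator norm equals $\norm{X_i}$. Then for any $\theta>0$, apply Lieb's concavity theorem (as in Tropp's matrix concentration) to control $\E\,\mathrm{tr}\exp\paren{\theta\sum_i Y_i}$. This reduces the problem to bounding each $\log\E\exp(\theta Y_i)$ by a scalar multiple of the identity. The required scalar bound comes from the sub-Gaussian moment bounds on $\norm{X_i}$ from Step 1 together with $\E[X_i]=0$.

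The expected obstacle is this scalar bound. Zero mean kills the linear term of the Taylor expansion. The higher moments $\E\norm{X_i}^{p}\leq (C\sigma\sqrt{p})^{p}$ then give $\E\exp(\theta Y_i)\preceq \exp(c'\theta^2\sigma^2) I$. Combining these gives
\begin{equation}
\P\paren{\norm{\sum_i X_i}\geq s}\leq 2(d+1)\exp\paren{-\frac{s^2}{c''n\sigma^2}}.
\end{equation}
Since the paper states only a variant of~\cite[Corollary 7]{jin2019short}, one may equally cite their Lemma 6 / Corollary 7 directly for this bound. The self-contained dilation argument is what I would write if a full derivation is needed.

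\textbf{Step 3: invert the tail bound.} Set the right-hand side of the Step 2 bound equal to $\delta$. This gives $s=\sqrt{c''n\sigma^2\log(2(d+1)/\delta)}$. Use $d+1\le 2d$ and absorb the resulting factor into the constant via $\log(4d/\delta)\le 2\log(2d/\delta)$, which holds because $2d/\delta\geq 2$. Dividing by $n$ yields~\eqref{eqn:VR_subG} with a universal constant $c$.
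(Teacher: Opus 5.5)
Your proposal is correct and takes essentially the same route as the paper. The paper simply defers to \cite[Lemma 6, Corollary 7]{jin2019short}, whose proof is exactly the Hermitian-dilation argument with a Lieb/Tropp bound of the form $\E e^{\theta Y_i}\preceq e^{c'\theta^2\sigma^2}I$. Your Steps 1 and 3 usefully make explicit what the paper leaves implicit in calling the lemma a ``variant'': the passage from the moment-generating-function form of norm-sub-Gaussianity to the tail form used in \cite{jin2019short}, and the absorption of the factor $d+1$ into $\log(2d/\delta)$.
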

The proof can be found in~\cite{jin2019short}. Lemma~\ref{lem:VR_subG} explicitly shows how averaging $n$ norm-sub-Gaussian random vectors leads to an $n$-fold reduction in variance, causing the norm of the average to concentrate more tightly around zero as $n$ increases. Using this tool, in the next lemma, we show that the gradient $g_k$ in~\eqref{eqn:refined_gradient} for the \texttt{Batched SGD} algorithm enjoys a similar variance-reduction effect. 

\begin{lemma}[Variance-Reduced Gradients]\label{lem:VR_sn}
Define $n_k:=g_k-\nabla f(x^{(k)})$, where $g_k$ is as in~\eqref{eqn:refined_gradient}. Given any $\delta\in(0, 1)$, w.p. at least $1-\delta$, the following holds (for \texttt{Batched SGD}) simultaneously for all $k=0, 1, \ldots, K-1$:
\begin{equation}
\label{eqn:VR_noise}
    \norm{n_k}^2\leq \frac{c^2\sigma^2}{H}\log\frac{2dT}{\delta},
\end{equation}
where $c$ is some universal constant.
\end{lemma}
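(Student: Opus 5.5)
The plan is to apply Lemma~\ref{lem:VR_subG} conditionally within each batch and then take a union bound over the $K$ batches. We write
\begin{equation*}
n_k=\frac{1}{H}\sum_{\ell=0}^{H-1}X_{k,\ell}, \qquad X_{k,\ell}:=g(x^{(k)},\xi_{kH+\ell})-\nabla f(x^{(k)}).
\end{equation*}
The key observation is that $x^{(k)}$ is frozen throughout batch $k$ and is measurable with respect to $\cF_{kH-1}$, the sigma-algebra generated by all samples from earlier batches. The samples $\xi_{kH},\dots,\xi_{kH+H-1}$ are I.I.D. and independent of $\cF_{kH-1}$. Therefore, conditioned on $\cF_{kH-1}$, the vectors $X_{k,0},\dots,X_{k,H-1}$ are independent. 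By Assumption~\ref{ass:unbiased} they are zero-mean, and by Assumption~\ref{ass:subG}, applied at the fixed point $x=x^{(k)}$, they are norm-sub-Gaussian with variance proxy $\sigma^2$.

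Next we apply Lemma~\ref{lem:VR_subG} conditionally, with $n=H$ and failure probability $\delta/K$. This gives
\begin{equation*}
\P\paren{\norm{n_k}^2> \frac{c^2\sigma^2}{H}\log\frac{2dK}{\delta}\,\Big|\,\cF_{kH-1}}\le \frac{\delta}{K}.
\end{equation*}
The conditional threshold does not depend on $\cF_{kH-1}$. Taking expectations (the tower property, exactly as in the proof of Lemma~\ref{lem:sn}) therefore yields the same bound unconditionally.

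A union bound over $k=0,\dots,K-1$ then shows that, with probability at least $1-\delta$, every batch satisfies $\norm{n_k}^2\le (c^2\sigma^2/H)\log(2dK/\delta)$. Since $K\le T$, we have $\log(2dK/\delta)\le\log(2dT/\delta)$, which gives~\eqref{eqn:VR_noise}. Using $T$ in the statement is only a looser but simpler bound.

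The main subtlety is justifying the conditional application of Lemma~\ref{lem:VR_subG}, which is stated for independent vectors with deterministic distributions. The point to verify is that, given $\cF_{kH-1}$, each $X_{k,\ell}$ is a fixed measurable function of a fresh independent sample. Hence the hypotheses of Lemma~\ref{lem:VR_subG} hold pointwise for every realization of $x^{(k)}$. This is exactly what freezing the iterate within a batch buys us, in contrast to standard SGD, where the iterate changes with each sample.
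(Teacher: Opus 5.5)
Your proposal is correct and follows essentially the same route as the paper: condition on $\cF_{kH-1}$ to freeze $x^{(k)}$, apply Lemma~\ref{lem:VR_subG} to the $H$ conditionally independent, zero-mean, norm-sub-Gaussian noise vectors, remove the conditioning via the tower property, and union bound over the $K\le T$ batches. Your explicit per-batch allocation of $\delta/K$, followed by $\log(2dK/\delta)\le\log(2dT/\delta)$, is just a slightly more detailed version of the paper's union-bound step.
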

\begin{proof}
First, fix a batch index $k$. As per the definition of $g_k$ in~\eqref{eqn:refined_gradient}, we can decompose the noise $n_k$ as
\begin{equation}
    n_k=\frac{1}{H}\sum_{\ell=0}^{H-1}n_{k,\ell},
\end{equation}
where each component $n_{k,\ell}$ is defined as
\begin{equation}
    n_{k,\ell} := g(x^{(k)}, \xi_{kH+\ell})-\nabla f(x^{(k)}).
\end{equation}
We then argue that $\{n_{k,\ell}\}_{\ell=0}^{H-1}$ satisfies the conditions in Lemma~\ref{lem:VR_subG}. To that end, first condition on $\cF_{kH-1}$ such that $x^{(k)}$ becomes deterministic, and the sole source of randomness in $\{n_{k,\ell}\}_{\ell=0}^{H-1}$ comes from $\{\xi_{kH+\ell}\}_{\ell=0}^{H-1}$, which is an \textbf{independent} sequence. In the special case of $k=0$, we do not need to condition since $x^{(0)}$ is already deterministic.

According to Assumption~\ref{ass:unbiased}, the stochastic gradient $g(x^{(k)}, \xi_{kH+\ell})$ is an unbiased estimate of the true gradient $\nabla f(x^{(k)})$, implying that the noise component $n_{k,\ell}$ is zero-mean conditioned on $\cF_{kH-1}$. Moreover, Assumption~\ref{ass:subG} states that the noise in each stochastic gradient is norm-sub-Gaussian with variance proxy $\sigma^2$, i.e.,
\begin{equation} \E_{kH}\bracket{\exp\paren{\frac{\norm{n_{k,\ell}}^2}{\sigma^2}}}\leq \exp(1),\quad \forall \ell=0,\cdots,H-1,
\end{equation}
where recall that $\E_{kH}[\cdot]:=\E[\cdot\mid\cF_{kH-1}]$.
We have thus proved that $\{n_{k,\ell}\}_{\ell=0}^{H-1}$ satisfies the conditions in Lemma~\ref{lem:VR_subG}. Applying Lemma~\ref{lem:VR_subG} then yields: for any $\delta\in(0,1)$,
\begin{equation}
\medmath{
\P_{kH}\paren{\norm{\frac{1}{H}\sum_{\ell=0}^{H-1}n_{k,\ell}}\leq c\sqrt{\frac{\sigma^2}{H}\log\frac{2d}{\delta}}}\geq 1-\delta.
}
\end{equation}
It then follows from using the tower property (similarly as in~\eqref{eqn:subg_proof}) that with probability at least $1-\delta$,
\begin{equation}
\medmath{
    \norm{n_k}\leq c\sqrt{\frac{\sigma^2}{H}\log\frac{2d}{\delta}}.} 
\end{equation}
Union bounding over all $K\leq T$ batches and squaring both sides then yields the desired claim.
\end{proof}

We can now complete the proof of Lemma~\ref{lem:onestep_sc} by first establishing a one-step descent (similar to Lemma~\ref{lem:onestep}), and then controlling the resulting cross term via Young's inequality coupled with Lemma~\ref{lem:VR_sn}.
Using the smoothness of $f$ and selecting $\alpha\leq 1/(2L)$, we obtain:
\begin{equation}
\resizebox{1\hsize}{!}{$
    r^{(k+1)}\leq r^{(k)} - \frac{\alpha}{2}\norm{\nabla f(x^{(k)})}^2-\alpha\iprod{\nabla f(x^{(k)})}{n_k}+L\alpha^2\norm{n_k}^2.\label{eqn:onestep_batch}
$}
\nonumber
\end{equation}
Using Young's inequality to bound the cross term, we have
\begin{equation}
\medmath{
\begin{aligned}
    r^{(k+1)}&\leq r^{(k)}- \frac{\alpha}{2}\norm{\nabla f(x^{(k)})}^2  +L\alpha^2\norm{n_k}^2\\
    &\ +\frac{\alpha}{4}\norm{\nabla f(x^{(k)})}^2 +\alpha\norm{n_k}^2\\
    &\leq r^{(k)}- \frac{\alpha}{4}\norm{\nabla f(x^{(k)})}^2  +2\alpha\norm{n_k}^2,\label{eqn:thm_batch_int}
\end{aligned}
}
\end{equation}
where we use the fact that $\alpha L\leq 1$, implying  $L\alpha^2\leq \alpha$. Now recall the 
gradient domination property of strongly convex functions: $\Vert \nabla f(x) \Vert^2_2 \geq 2 \mu (f(x) - f(x^*)), \forall x \in \mathbb{R}^d$. Using this and plugging in~\eqref{eqn:VR_noise} from Lemma~\ref{lem:VR_sn} into~\eqref{eqn:thm_batch_int} yields with probability at least $1-\delta$, for all $k\geq 0$:
\begin{equation}
\begin{aligned}
    r^{(k+1)} &\leq \paren{1-\frac{\alpha\mu}{2}}r^{(k)}+\frac{2c^2\sigma^2\alpha}{H}\log\frac{2dT}{\delta}.
\end{aligned}
\end{equation}
Iterating from $k=0$ to $K-1$ leads to the claim of Lemma~\ref{lem:onestep_sc}.

We now show that our proof technique extends readily to non-convex functions as well. 
\begin{theorem}[Non-Convex Objective, \texttt{Batched SGD}]\label{thm:nc}
Suppose Assumptions~\ref{ass:smoothness},~\ref{ass:unbiased},~\ref{ass:subG} hold. Then, given any $\delta\in(0,1)$, there exists a universal constant $c$ such that with $K=\sqrt{T}$ and $\alpha\leq1/(2L)$, the following holds for \texttt{Batched SGD} w.p. at least $1-\delta$: 
\begin{equation}
\frac{1}{K}\sum_{k=0}^{K-1}\norm{\nabla f(x^{(k)})}^2\leq \frac{8Lr^{(0)}}{\sqrt{T}}+\frac{8c^2\sigma^2}{\sqrt{T}}\log\frac{2dT}{\delta}.\label{eqn:nonconvex_sgd}
\end{equation}
\end{theorem}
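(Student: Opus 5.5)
The plan is to reuse the batch-level one-step descent from the proof of Lemma~\ref{lem:onestep_sc}, but to telescope it directly instead of invoking gradient domination. Strong convexity was used there only to convert $\norm{\nabla f(x^{(k)})}^2$ into $r^{(k)}$. The descent inequality~\eqref{eqn:thm_batch_int} itself relies only on $L$-smoothness, the choice $\alpha\le 1/(2L)$, and Young's inequality, so it remains valid here. Concretely, for every $k$,
\begin{equation*}
r^{(k+1)}\leq r^{(k)}-\frac{\alpha}{4}\norm{\nabla f(x^{(k)})}^2+2\alpha\norm{n_k}^2 .
\end{equation*}
In the non-convex setting, $r^{(k)}:=f(x^{(k)})-f(x^*)$ should be read with $f(x^*)=\inf_x f(x)>-\infty$ (a global minimizer, or at least a finite lower bound). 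This ensures $r^{(K)}\ge 0$.

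Next I invoke Lemma~\ref{lem:VR_sn}. Its proof used only Assumptions~\ref{ass:unbiased} and~\ref{ass:subG}, conditioning on $\cF_{kH-1}$, and a union bound over the $K\le T$ batches. Hence, w.p. at least $1-\delta$, simultaneously for all $k$, we have $\norm{n_k}^2\le \frac{c^2\sigma^2}{H}\log\frac{2dT}{\delta}$. On this event I sum the descent inequality over $k=0,\dots,K-1$ and telescope. Using $r^{(K)}\ge 0$, this gives
\begin{equation*}
\frac{\alpha}{4}\sum_{k=0}^{K-1}\norm{\nabla f(x^{(k)})}^2\leq r^{(0)}+\frac{2\alpha K c^2\sigma^2}{H}\log\frac{2dT}{\delta}.
\end{equation*}
Dividing by $\alpha K/4$ then yields
\begin{equation*}
\frac{1}{K}\sum_{k=0}^{K-1}\norm{\nabla f(x^{(k)})}^2\le \frac{4r^{(0)}}{\alpha K}+\frac{8c^2\sigma^2}{H}\log\frac{2dT}{\delta}.
\end{equation*}

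Finally I plug in the parameters. Choosing $K=\sqrt{T}$ forces $H=T/K=\sqrt{T}$. Taking $\alpha=1/(2L)$, the largest admissible step-size, gives $4r^{(0)}/(\alpha K)=8Lr^{(0)}/\sqrt{T}$, which is exactly~\eqref{eqn:nonconvex_sgd}. For a general $\alpha\le 1/(2L)$, the first term reads $4r^{(0)}/(\alpha\sqrt{T})$ instead. The stated constant $8L$ therefore corresponds to the choice $\alpha=1/(2L)$, and I would note this explicitly.

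There is no real obstacle, since the batching already neutralizes the cross term. The only points needing care are the following two.
\begin{itemize}
\item The high-probability event of Lemma~\ref{lem:VR_sn} must hold uniformly over all batches, so that the summation is legitimate on a single event of probability at least $1-\delta$.
\item The telescoping requires $r^{(K)}\ge 0$, which in turn requires $f$ to be bounded below.
\end{itemize}
The balancing $K=H=\sqrt{T}$ is what equalizes the optimization term, which is $\mathcal{O}(1/K)$, and the noise term, which is $\mathcal{O}(1/H)$. This contrasts with the strongly convex case, where the exponential decay in $K$ allowed $K=\mathcal{O}(\log T)$.
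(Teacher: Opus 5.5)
Your proposal is correct and follows the paper's proof essentially step for step: you telescope the smoothness-only descent inequality~\eqref{eqn:thm_batch_int}, drop $-r^{(K)}$, invoke Lemma~\ref{lem:VR_sn} on a single event that holds uniformly over batches, and set $\alpha=1/(2L)$ and $K=H=\sqrt{T}$. Your two side remarks are accurate and make explicit what the paper leaves implicit: $f$ must be bounded below so that $r^{(K)}\ge 0$, and the constant $8L$ corresponds to $\alpha=1/(2L)$ rather than an arbitrary $\alpha\le 1/(2L)$.
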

\begin{proof}
First, note that arriving at~\eqref{eqn:thm_batch_int} only requires smoothness of $f$. As such, telescoping~\eqref{eqn:thm_batch_int} from $k=0$ to $K-1$ and dividing both sides by $\alpha K/4$ yields:
\begin{equation}
\frac{1}{K}\sum_{k=0}^{K-1}\norm{\nabla f(x^{(k)})}^2\leq \frac{4\paren{r^{(0)}-r^{(K)}}}{\alpha K}+\frac{8}{K}\sum_{k=0}^{K-1}\norm{n_k}^2.\label{eqn:nc_int}
\end{equation}
Removing the negative term on the R.H.S., selecting $\alpha=1/(2L)$, and plugging in~\eqref{eqn:VR_noise} from Lemma~\ref{lem:VR_sn} into~\eqref{eqn:nc_int} yields w.p. at least $1-\delta$,
\begin{equation}
\frac{1}{K}\sum_{k=0}^{K-1}\norm{\nabla f(x^{(k)})}^2\leq \frac{8Lr^{(0)}}{K}+\frac{8c^2\sigma^2}{H}\log\frac{2dT}{\delta}.
\end{equation}
Selecting $K=\sqrt{T}$ then establishes the desired claim.
\end{proof}
\textbf{Main Takeaways.} Theorem~\ref{thm:nc} shows that the guarantee obtained by \texttt{Batched SGD} also extends to the general non-convex case, achieving a near-optimal rate of $\bigo{\log(dT/\delta)/\sqrt{T}}$. Compared with prior works~\cite{Orabona,liu2023revisiting}, this rate incurs only an extra $\log d$ factor as in the strongly convex case, while the proof remains significantly simpler. 
\begin{remark} Interestingly, to achieve near-optimal rates, while $K=\mc{O}(\log(T))$ updates sufficed for the strongly-convex case, the non-convex case requires $K=\sqrt{T}$ updates (from Theorem~\ref{thm:nc}). Thus, favorable structural properties of $f$ allow for less frequent parameter updates. 
\end{remark}

\section{Extension to The Federated Setting}\label{sec:FL}
In this section, we extend the batching principle developed in Section~\ref{sec:bsgd} to the federated setting. Recall that the \textbf{core idea} of \texttt{Batched SGD} is to refine the stochastic gradient using multiple samples before performing an update, thereby reducing the variance of the noise, simplifying the high-probability analysis, and saving the number of update rounds.

Interestingly, the same idea naturally translates to the federated setting, where local gradient computations can be viewed as forming batches between successive communication rounds. From this perspective, \textit{the batch-size $H$ directly informs the communication frequency between the central server and the agents}. We show that this batching idea leads to logarithmic rounds of communication and a simple high-probability analysis for federated optimization.

\subsection{Problem Formulation}
We consider a setting involving a central server and $M$ agents, where each agent $i\in[M]$ is associated with a local objective function $f_i:\R^d\to\R$. The  standard federated learning~\cite{mcmahan,konevcny} problem seeks to solve: 
\begin{equation}
    \min_{x\in\R^d} f(x):=\frac{1}{M}\sum_{i\in[M]}f_i(x),\label{eqn:global_f}
\end{equation}
subject to stringent privacy and communication constraints. To capture the standard ``intermittent communication model" in FL~\cite{mcmahan, konevcny}, suppose each agent $i \in [M]$ has access to $T$ online samples, and let $\xi_{i,t}$ denote the noise sample for agent $i$ at time-step $t$. These $T$ samples are partitioned into $K$ communication rounds of duration $H$ each, such that $T=KH$. The standard FL algorithm proposed in~\cite{mcmahan} proceeds as follows. At the beginning of each communication round $k=0,\cdots,K-1$, the server broadcasts the current global parameter $\bar x^{(k)}$ to all agents. Each agent then initializes its local parameter from 
$\bar x^{(k)}$, and performs $H$ local parameter updates (in isolation) of the following form: 
\begin{equation}
    x_{i,\ell+1}^{(k)} = x_{i,\ell}^{(k)} - \alpha g_i(x_{i,\ell}^{(k)}, \xi_{i,kH+\ell}), \ell=0,\cdots, H-1. \label{eqn:fl_update}
\end{equation}
Here, $\alpha$ is the step-size, $x_{i,\ell}^{(k)}$ denotes the local parameter of agent $i$ at local iteration $\ell$ of round $k$, $g_i(x_{i,\ell}^{(k)}, \xi_{i,kH+\ell})$ is a noisy version of $\nabla f_i(x_{i,\ell}^{(k)})$, and $\xi_{i,kH+\ell}$ is the RV capturing the randomness in $g_i$. After $H$ local updates, the server aggregates the local models to update the global parameter:
\begin{equation}
    \bar x^{(k+1)} = \frac{1}{M}\sum_{i\in[M]}x_{i,H}^{(k)}.\label{eqn:aggr_naive}
\end{equation}

The new global model $\bar x^{(k+1)}$ is then broadcast to all agents to initiate the next round. Although the \emph{expected} behavior of the above algorithm (and its numerous variants) has been extensively analyzed~\cite{mitra2021linear,karimireddy2020scaffold}, we are unaware of any non-trivial high-probability bounds in FL. To close this gap, we start by making the following assumptions, which are natural FL counterparts of those in Section~\ref{sec:prob_form}. 

\begin{assumption}[Unbiased Gradient]\label{ass:unbiased_fed}
For each agent $i\in[M]$, the stochastic gradient $g_i(x, \xi)$ is unbiased, i.e., $\E_\xi [g_i(x,\xi)] = \nabla f_i(x), \forall x\in\R^d.$
\end{assumption}
\begin{assumption}[Norm-Sub-Gaussian Noise]\label{ass:norm_subG_fed}
For each agent $i\in[M]$, the noise in $g_i(x, \xi)$ is norm-sub-Gaussian, i.e., 
\begin{equation}
    \E_\xi\bracket{\exp\paren{\norm{g_i(x,\xi)-\nabla f_i(x)}^2/\sigma^2}}\leq \exp\paren{1}, \forall x \in \mathbb{R}^d, 
    \label{eqn:subG_fed}
\end{equation}
where $\sigma^2 > 0$ is the variance proxy.
\end{assumption}
\begin{assumption}[Independence]\label{ass:indep}
The stochastic processes $\{\xi_{i,t}\}$ and $\{\xi_{j,t}\}$ are statistically independent $\forall i\neq j\in[M]$. 
\end{assumption}
Assumption~\ref{ass:indep} is standard, and needed to obtain collaborative speedups w.r.t. the number of agents; see~\cite{karimireddy2020scaffold, zhuachieving, mangold2024scafflsa}. 

\subsection{Batched SGD for FL}
Similar to the SGD update rule in~\eqref{eqn:SGD}, in standard FL algorithms like \texttt{FedAvg}~\cite{mcmahan}, the local parameter $x_{i,\ell}^{(k)}$ of each agent $i\in[M]$ is updated $H$ times in each round via~\eqref{eqn:fl_update}; each such update is made using a noisy stochastic gradient constructed from a \textbf{single} sample $\xi_{i,kH+\ell}$. Unlike in the single-agent case, this vanilla update rule results in what is known as the ``client-drift'' issue in the FL setting~\cite{mitra2021linear, karimireddy2020scaffold}. Specifically, since each agent $i\in[M]$ updates its local model $H$ times during each communication round without synchronization, the local model of agent $i$ tends to drift toward its own local optimum $x_i^* \in \argmin_x f_i(x)$. Consequently, naively aggregating the local models as per~\eqref{eqn:aggr_naive} yields a \textit{bias term} in the final guarantee that hinders the benefits of collaboration~\cite{mitra2021linear, karimireddy2020scaffold}. 

In what follows, we will show that this drift issue can be completely resolved by using the batching idea. Concretely, during each round $k$, instead of making multiple local updates using highly noisy stochastic gradients, we advocate making \textbf{one single update using a refined gradient} $g_{i,k}$ constructed using the samples collected during round $k$:  
\begin{equation}
    g_{i,k}:=\frac{1}{H}\sum_{\ell=0}^{H-1}g_i(\bar x^{(k)}, \xi_{i,kH+\ell}).\label{eqn:refined_gradient_fed}
\end{equation}
Agent $i$ then makes a \emph{single} update to its local model as: 
\begin{equation}
    x_i^{(k)} = \bar x^{(k)} - \alpha g_{i,k},\label{eqn:update-fl}
\end{equation}
where $x_i^{(k)}$ denotes the local model of agent $i$ at the end of round $k$. Finally, the server aggregates the local models across agents and updates the global parameter as follows:
\begin{equation}
    \bar x^{(k+1)} = \frac{1}{M}\sum_{i\in[M]}x_i^{(k)}.\label{eqn:aggr_fl}
\end{equation}
We call the above method \texttt{Batched FL}, and summarize it in Algorithm~\ref{algo:batched-fl}, with an illustration in Fig.~\ref{fig:fl}.

\begin{figure}
    \centering
    \includegraphics[scale = 0.5]{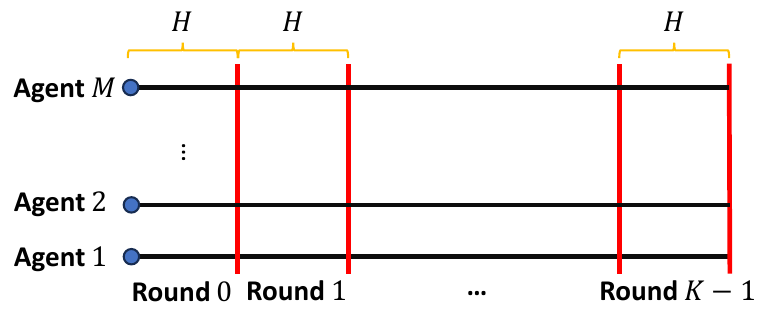}
    \caption{Illustration of \texttt{Batched FL}. Each agent in $[M]$ partitions its $T$ samples into $K$ communication rounds. At the red bars, agents update their local parameters and the server aggregates them.}
    \label{fig:fl}
\end{figure}
\begin{algorithm}[t!]
\caption{\texttt{Batched FL}} 
\label{algo:batched-fl}
\begin{algorithmic}[1]
\State \textbf{Input:} Step-size $\alpha$, initial parameter $\bar x^{(0)}$, number of samples $T$, number of communication rounds $K$, number of samples in each round $H$, number of agents $M$.
\For {$k=0,\ldots,K-1$} 
\For {$i=1,\ldots, M$}
\For {$\ell=0,\ldots, H-1$}
\State Agent $i$ computes $g_i(\bar x^{(k)}, \xi_{i,kH+\ell})$
\EndFor
\State Agent $i$ constructs $g_{i,k}$ as per~\eqref{eqn:refined_gradient_fed}
\State Agent $i$ computes local model $x_i^{(k)}$ via~\eqref{eqn:update-fl} and transmits it to the server
\EndFor
\State Server broadcasts $\bar x^{(k+1)}$ computed as in~\eqref{eqn:aggr_fl}
\EndFor
\end{algorithmic}
\end{algorithm}
\begin{remark} Departing fundamentally from the standard FL template~\cite{mcmahan, konevcny, mitra2021linear, karimireddy2020scaffold, zhuachieving, mangold2024scafflsa} where multiple local updates are made per round using noisy gradients, the essence of \texttt{Batched FL} lies in making a single precise update using a direction that enjoys an $H$-fold lower variance. This ensures that the local models are always \textbf{synchronized}, thereby preventing biases from heterogeneity in local loss functions, while preserving collaborative benefits. We later show that this mechanism only requires \textbf{logarithmically} many communication rounds for strongly convex objectives.
\end{remark}
\subsection{Results And Analysis}
In this section, we present the high-probability rates for \texttt{Batched FL}. To do so, let us define $r^{(k)}:=f(\bar x^{(k)})-f(x^*)$ as the global function sub-optimality gap in round $k$, where $x^*:=\arg\min_x f(x)$.  We then have the following theorem for strongly convex loss functions. 
\begin{theorem}[Strongly Convex Objective, \texttt{Batched FL}]\label{thm:fl_sc}
Suppose the global objective function $f$ in~\eqref{eqn:global_f} satisfies Assumptions~\ref{ass:smoothness},~\ref{ass:SC}, and suppose Assumptions~\ref{ass:unbiased_fed},~\ref{ass:norm_subG_fed},~\ref{ass:indep} hold. Then, given any $\delta\in(0,1)$, there exists a universal constant $c$ such that with $K=4\kappa\log MT$ and $\alpha=1/(2L)$, the following holds for \texttt{Batched FL} w.p. at least $1-\delta$: 
\begin{equation}
    r^{(K)}\leq \frac{r^{(0)}}{MT} + \frac{16c^2\kappa\sigma^2}{\mu MT}\log MT\log\frac{2dT}{\delta}.\label{eqn:sc_fl}
\end{equation}
\end{theorem}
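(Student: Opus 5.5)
The plan is to reduce \texttt{Batched FL} to \texttt{Batched SGD} on the global objective $f$, with an effective batch of size $MH$. First I will write the global update explicitly. Substituting~\eqref{eqn:update-fl} into~\eqref{eqn:aggr_fl} gives
\begin{equation*}
\bar x^{(k+1)} = \bar x^{(k)} - \alpha \bar g_k,\qquad \bar g_k:=\frac{1}{MH}\sum_{i\in[M]}\sum_{\ell=0}^{H-1} g_i(\bar x^{(k)},\xi_{i,kH+\ell}).
\end{equation*}
All agents evaluate their stochastic gradients at the common point $\bar x^{(k)}$. Hence
\begin{equation*}
\bar g_k-\nabla f(\bar x^{(k)}) = \frac{1}{MH}\sum_{i,\ell} n_{i,k,\ell},\qquad n_{i,k,\ell}:=g_i(\bar x^{(k)},\xi_{i,kH+\ell})-\nabla f_i(\bar x^{(k)}),
\end{equation*}
because $\frac1M\sum_i\nabla f_i=\nabla f$. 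The heterogeneity terms cancel exactly, and no drift term appears.

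Second, I will establish the FL analogue of Lemma~\ref{lem:VR_sn}. I condition on the $\sigma$-algebra generated by all samples of all agents before round $k$, so that $\bar x^{(k)}$ is deterministic. The $MH$ vectors $n_{i,k,\ell}$ are then independent: within an agent this follows from the i.i.d.\ samples, and across agents from Assumption~\ref{ass:indep}. They are zero-mean by Assumption~\ref{ass:unbiased_fed} and norm-sub-Gaussian with proxy $\sigma^2$ by Assumption~\ref{ass:norm_subG_fed}. Lemma~\ref{lem:VR_subG} with $n=MH$, followed by the tower property and a union bound over the $K\le T$ rounds, gives with probability at least $1-\delta$, simultaneously for all $k$,
\begin{equation*}
\Big\|\bar g_k-\nabla f(\bar x^{(k)})\Big\|_2^2\le \frac{c^2\sigma^2}{MH}\log\frac{2dT}{\delta}.
\end{equation*}

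Third, I will repeat the argument of Lemma~\ref{lem:onestep_sc} verbatim for $f$, using only its $L$-smoothness and $\mu$-strong convexity together with $\alpha=1/(2L)$. This yields the recursion $r^{(k+1)}\le(1-\alpha\mu/2)r^{(k)}+2\alpha\|\bar g_k-\nabla f(\bar x^{(k)})\|_2^2$. Unrolling it gives
\begin{equation*}
r^{(K)}\le (1-\tfrac{1}{4\kappa})^K r^{(0)}+\frac{4c^2\sigma^2}{\mu MH}\log\frac{2dT}{\delta}.
\end{equation*}
With $K=4\kappa\log(MT)$, the first term is at most $e^{-\log MT}r^{(0)}=r^{(0)}/(MT)$. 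Using $H=T/K=T/(4\kappa\log MT)$, the second term becomes $\frac{16c^2\kappa\sigma^2}{\mu MT}\log(MT)\log\frac{2dT}{\delta}$, which matches~\eqref{eqn:sc_fl}.

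The main obstacle is the independence and conditioning argument in the second step, which is where the linear speedup comes from. The filtration must include all agents' samples up to the previous round so that $\bar x^{(k)}$ is measurable, and the current-round samples, jointly across agents, must remain independent of it. The remaining steps are direct transcriptions of the single-agent proof.
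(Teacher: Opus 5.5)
Your proposal is correct and follows essentially the same route as the paper. Like the paper, you collapse the round to a single update $\bar x^{(k+1)}=\bar x^{(k)}-\alpha g_k$ whose noise is an average of $MH$ conditionally independent, zero-mean, norm-sub-Gaussian vectors; you apply Lemma~\ref{lem:VR_subG} with $n=MH$ conditioned on all agents' earlier samples, then use the tower property and a union bound over $K\le T$ rounds. You then reuse the single-agent recursion with gradient domination and substitute $H=T/K$ with $K=4\kappa\log MT$, exactly as the paper's proof does.
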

\begin{proof}
Similar to Lemma~\ref{lem:onestep}, using smoothness of $f$, defining $g_k:=(1/M)\sum_{i\in[M]}g_{i,k}$, $n_k:=g_k - \nabla f(\bar x^{(k)})$ and selecting $\alpha L\leq 1/2$, we have the one-step descent
\begin{equation}
\begin{aligned}
    r^{(k+1)}&\leq r^{(k)}-\frac{\alpha}{4}\norm{\nabla f(\bar x^{(k)})}^2+2\alpha\norm{n_k}^2.\label{eqn:one-step-fl}
\end{aligned}
\end{equation}

Next, we need to bound the squared-norm term $\norm{n_k}^2$, which follows a similar process as in the proof of Lemma~\ref{lem:VR_sn}. Fixing a round $k$, as per the definition of $n_k$, we have 
\begin{equation}
\medmath{
\begin{aligned}
    n_k&=\frac{1}{M}\sum_{i\in[M]}\paren{\frac{1}{H}\sum_{\ell=0}^{H-1}g_i(\bar x^{(k)}, \xi_{i,kH+\ell}) - \nabla f_i(\bar x^{(k)})}\\
    &=\frac{1}{MH}\sum_{i\in[M]}\sum_{\ell=0}^{H-1}n_{i,k,\ell},
\end{aligned}
}
\end{equation}
where we define $n_{i,k,\ell}:=g_i(\bar x^{(k)}, \xi_{i,kH+\ell}) - \nabla f_i(\bar x^{(k)})$. We now verify that $\{n_{i,k,\ell}\}_{i,\ell}$ satisfies the conditions for invoking the variance-reduction effect in  Lemma~\ref{lem:VR_subG}. To do so, let $\cF_{kH-1}:=\{\xi_{i,t}\}_{i\in[M], t\leq kH-1}$ be the sigma-algebra generated by all the randomness across all agents in $[M]$ up to time-step $kH-1$ ($\cF_{-1}=\varnothing$). Observe that $\bar x^{(k)}$ is $\cF_{kH-1}$-measurable. Conditioned on $\cF_{kH-1}$ then, from Assumption~\ref{ass:indep},  $\{n_{i,k,\ell}\}_{i,\ell}$ is \textbf{independent} since the only randomness lies in $\{\xi_{i,kH+\ell}\}_{i,\ell}$; from Assumption~\ref{ass:unbiased_fed}, it is \textbf{zero-mean} due to the unbiasedness of $g_i$'s. Finally, Assumption~\ref{ass:norm_subG_fed} states that each $n_{i,k,\ell}$ is $\sigma^2$\textbf{-norm-sub-Gaussian}, i.e.,
\begin{equation}
    \E\bracket{\exp\paren{\frac{\norm{n_{i,k,\ell}}^2}{\sigma^2}}\mid \cF_{kH-1}}\leq \exp(1),\quad \forall i,\ell.
\end{equation}
Now that we have shown that the noise sequence $\{n_{i,k,\ell}\}_{i,\ell}$ satisfies all the requirements for Lemma~\ref{lem:VR_subG}, invoking it and using the tower property to get rid of the conditioning on $\cF_{kH-1}$ yields for any $\delta\in(0,1)$, w.p. at least $1-\delta$:
\begin{equation}
    \norm{n_k}\leq c\sqrt{\frac{\sigma^2}{MH}\log\frac{2d}{\delta}}.
\end{equation}
Union bounding over all $K\leq T$ rounds and squaring both sides yields w.p. at least $1-\delta$,
\begin{equation}
    \norm{n_k}^2\leq \frac{c^2\sigma^2}{MH}\log\frac{2dT}{\delta}.\label{eqn:sn_fl}
\end{equation}
Plugging this into~\eqref{eqn:one-step-fl} and using the gradient domination property of $f$ yields w.p. at least $1-\delta$,
\begin{equation}
\medmath{
}    r^{(k+1)}\leq \paren{1-\frac{\alpha\mu}{2}}r^{(k)}+\frac{2c^2\sigma^2\alpha}{MH}\log\frac{2dT}{\delta}.
\end{equation}
Iterating from $k=0$ to $K-1$ and selecting $\alpha=1/(2L)$ yields w.p. at least $1-\delta$,
\begin{equation}
\medmath{
    r^{(K)}\leq \exp\paren{-\frac{K}{4\kappa}}r^{(0)}+\frac{4c^2\sigma^2}{\mu MH}\log\frac{2dT}{\delta}.}
\end{equation}
Selecting $K=4\kappa\log MT$ yields the desired claim.
\end{proof}
A few comments are in order regarding Theorem~\ref{thm:fl_sc}.

$\bullet$ \textit{High-Probability Guarantees for FL.} The significance of Theorem~\ref{thm:fl_sc} lies in providing the first near-optimal high-probability rate on the order of $\mc{O}\left({\log(MT)\log\paren{dT/\delta}/(MT)}\right)$ in FL. We note also that Theorem~\ref{thm:fl_sc} applies to a fairly general setting, where the local loss functions can be arbitrarily heterogeneous, and they need not be smooth or even convex. 

$\bullet$ \textit{Linear Speedup with No Heterogeneity Bias.} Comparing the rate in~\eqref{eqn:final_sc} with that in~\eqref{eqn:sc_fl}, we note that the latter is tighter by a factor of $M$. This improvement by a factor of $M$ is typically referred to as a ``linear-speedup" effect, and stems from collaboration. Notably, we achieve such an effect without relying on 
sophisticated control-variate/gradient-tracking techniques used in prior FL work~\cite{karimireddy2020scaffold, mitra2021linear, zhuachieving, mangold2024scafflsa}.  

$\bullet$ \textit{Logarithmic Communication Rounds.} The batching idea informs the communication frequency in the federated setting just as it informs the number of updates in the single-agent setting. Specifically, since $K=\bigo{\log MT}$, only logarithmic communication rounds between the server and the agents suffice to achieve near-optimal rates, significantly reducing the communication overhead.

Similarly to Section~\ref{sec:analysis}, we can develop high-probability guarantees for a non-convex $f$ as well. 

\begin{theorem}[Non-Convex Objective, \texttt{Batched FL}]\label{thm:nc_fl}
Suppose the global objective function $f$ in~\eqref{eqn:global_f} satisfies Assumption~\ref{ass:smoothness}, and suppose Assumptions~\ref{ass:unbiased_fed},~\ref{ass:norm_subG_fed},~\ref{ass:indep} hold. Then, given any $\delta\in(0,1)$, there exists a universal constant $c$ such that with $K=\sqrt{MT}$ and $\alpha=1/(2L)$, the following holds for \texttt{Batched FL} w.p. at least $1-\delta$:
\begin{equation}
\frac{1}{K}\sum_{k=0}^{K-1}\norm{\nabla f(\bar x^{(k)})}^2\leq \frac{8Lr^{(0)}}{\sqrt{MT}}+\frac{8c^2\sigma^2}{\sqrt{MT}}\log\frac{2dT}{\delta}.
\end{equation}
\end{theorem}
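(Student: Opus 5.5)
The plan is to merge the two earlier arguments. The one-step descent and the noise control come from the proof of Theorem~\ref{thm:fl_sc}. The telescoping comes from the proof of Theorem~\ref{thm:nc}.

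First, I will write the global update as $\bar x^{(k+1)} = \bar x^{(k)} - \alpha g_k$ with $g_k := (1/M)\sum_{i\in[M]} g_{i,k}$. This follows by averaging~\eqref{eqn:update-fl} over agents using~\eqref{eqn:aggr_fl}. By Assumption~\ref{ass:unbiased_fed}, $n_k := g_k - \nabla f(\bar x^{(k)})$ is the noise of an unbiased estimate of $\nabla f(\bar x^{(k)})$, since $\nabla f = (1/M)\sum_i \nabla f_i$.

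Next, I apply $L$-smoothness of $f$ (Assumption~\ref{ass:smoothness}) exactly as in Lemma~\ref{lem:onestep}. I then use Young's inequality on the cross term, as in~\eqref{eqn:thm_batch_int}, with $\alpha L \le 1/2$. This gives the deterministic inequality~\eqref{eqn:one-step-fl}:
\[
r^{(k+1)} \le r^{(k)} - \frac{\alpha}{4}\norm{\nabla f(\bar x^{(k)})}^2 + 2\alpha\norm{n_k}^2 .
\]
This step uses only smoothness, so neither convexity nor strong convexity is required. Telescoping over $k=0,\dots,K-1$, dropping $-r^{(K)}\le 0$, and dividing by $\alpha K/4$ yields
\[
\frac{1}{K}\sum_{k=0}^{K-1}\norm{\nabla f(\bar x^{(k)})}^2 \le \frac{4 r^{(0)}}{\alpha K} + \frac{8}{K}\sum_{k=0}^{K-1}\norm{n_k}^2 .
\]

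The noise is controlled by the high-probability bound~\eqref{eqn:sn_fl}. I will check that its derivation in the proof of Theorem~\ref{thm:fl_sc} did not use Assumption~\ref{ass:SC}. It relied only on three facts: $\bar x^{(k)}$ is $\cF_{kH-1}$-measurable; the $MH$ noise vectors $n_{i,k,\ell}$ are conditionally independent (Assumption~\ref{ass:indep}), zero-mean, and $\sigma^2$-norm-sub-Gaussian; and Lemma~\ref{lem:VR_subG} holds, combined with the tower property and a union bound over $K\le T$ rounds. Hence, with probability at least $1-\delta$, simultaneously for all $k$,
\[
\norm{n_k}^2 \le \frac{c^2\sigma^2}{MH}\log\frac{2dT}{\delta}.
\]

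Substituting this bound and $\alpha = 1/(2L)$ gives
\[
\frac{1}{K}\sum_{k=0}^{K-1}\norm{\nabla f(\bar x^{(k)})}^2 \le \frac{8L r^{(0)}}{K} + \frac{8c^2\sigma^2}{MH}\log\frac{2dT}{\delta}.
\]
With $K=\sqrt{MT}$ and $H = T/K$, we get $MH = MT/\sqrt{MT} = \sqrt{MT}$, so both terms are of order $1/\sqrt{MT}$. This is exactly the claim.

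The only subtle point is the round structure. Each agent has $T$ samples, so we need $K\le T$, which means $\sqrt{MT}\le T$, i.e.\ $M\le T$. I will state this implicitly, together with the divisibility assumption, as in the single-agent case. Throughout, $r^{(0)} = f(\bar x^{(0)}) - \inf f$, since no minimizer need exist in the non-convex case.
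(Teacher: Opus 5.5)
Your proposal is correct and is essentially the paper's proof. Like the paper, you combine the smoothness-only descent inequality \eqref{eqn:one-step-fl}, the telescoping argument of Theorem~\ref{thm:nc}, and the noise bound \eqref{eqn:sn_fl}, using $MH = MT/K = \sqrt{MT}$ once $K=\sqrt{MT}$. Your side remarks are valid refinements that the paper leaves implicit: $H\ge 1$ and the union bound over $K\le T$ rounds require $M\le T$, and $r^{(0)}$ should be read relative to $\inf f$ when no minimizer exists.
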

\begin{proof}
The proof follows the exact same telescoping argument used for proving Theorem~\ref{thm:nc}, where we now use the bound on $\Vert n_k \Vert^2_2 $ in~\eqref{eqn:sn_fl}, and set $K=\sqrt{MT}.$ 
\end{proof}
\textbf{Main Takeaways.} Theorem~\ref{thm:nc_fl} establishes a high-probability guarantee for general non-convex objectives on the order of $\bigo{\log(dT/\delta)/\sqrt{MT}}$, once again demonstrating an explicit linear speedup with respect to the number of agents $M$. Compared to the strongly-convex setting, however, a higher communication frequency of $K= \sqrt{MT}$ is needed to achieve this speedup. 

\section{Conclusion}
We studied the problem of deriving high-probability bounds in the context of stochastic optimization. To that end, we introduced a simple variant of SGD called \texttt{Batched SGD} which partitions the online data stream into multiple batches, constructs low variance gradient directions per batch, and makes parameter updates using such refined directions. For this variant, we established near-optimal high-probability concentration bounds for smooth strongly-convex and non-convex functions. Notably, our proofs are significantly simpler relative to prior work and require no restrictive assumptions. Finally, we showed how our batching idea can be naturally extended to the federated learning setting. We conjecture that the basic idea employed in this paper can be used to generate simple concentration proofs for other stochastic approximation problems as well, including those that arise in reinforcement learning. Formalizing this conjecture is part of our ongoing work. 

\bibliographystyle{IEEEtran} 
\bibliography{bib}
\end{document}